\newtheorem{theo}{Theorem}[section]
\newtheorem{lema}[theo]{Lemma}
\newfont{\nset}{msbm10}
\def\C{\mbox{\boldmath $C$}}
\def\O{\mbox {\boldmath $O$}}
\def\1{\mbox{\rm 1}}
\def\d{\delta}
\def\vec0{\mbox{\boldmath $ 0 $}}
\def\w{\mbox{\boldmath $ w $}}
\def\x{\mbox{\boldmath $ x $}}
\def\z{\mbox{\boldmath $ z $}}
\def\1{\mbox{\boldmath $ 1 $}}
\def\j{\mbox{\boldmath $ j $}}
\def\A{\mbox{\boldmath $A$}}
\def\matrixM{\mbox{\boldmath $M$}}
\def\P{\mbox{\boldmath $ P $}}
\def\subC{\mbox{{\tiny $C$}}}
\def\subI{\mbox{{\tiny $I$}}}
\def\subF{\mbox{{\tiny $F$}}}
\begin{document}

\title{A Differential Approach for Bounding\\the Index of Graphs under Perturbations}

\author{C. Dalf\'{o}, M.A. Fiol, E. Garriga \\
         Departament de Matem\`{a}tica Aplicada IV \\
         Universitat Polit\`{e}cnica de Catalunya\\
         \texttt{\{cdalfo,fiol,egarriga\}@ma4.upc.edu}}

\maketitle

\begin{abstract}
This paper presents bounds for the variation of the spectral radius $\lambda(G)$ of a graph $G$ after
some perturbations or local vertex/edge modifications of $G$.
The perturbations considered here are the connection of a new vertex with, say, $g$ vertices of $G$, the addition of a pendant edge (the previous case with $g=1$) and the addition of an edge.
The method proposed here is based on continuous perturbations and the study of
their differential inequalities associated. Within rather economical information (namely, the degrees of the vertices involved in the perturbation), the best possible
inequalities are obtained. In addition, the cases when
equalities are attained are characterized. The asymptotic behavior of the bounds obtained is also discussed. For instance, if $G$ is a connected graph and $G_u$ denotes the graph obtained from $G$ by adding a pendant edge at vertex $u$ with degree $\delta_u$, then,
$$
\textstyle
\lambda(G_u)\le \lambda(G)+\frac{\delta_u}{\lambda^3(G)}+\textrm{o}\left(\frac{1}{\lambda^3(G)}\right).
$$
\end{abstract}

\noindent{\em Keywords:} Graph, Adjacency matrix, Spectral radius, Graph perturbation, Differential inequalities.
\\
\noindent{\em 2000 MSC:} 05C50 (47A55).

\section{Introduction}
When we represent a graph by its adjacency matrix, it is natural to ask how the
properties of the graph are related to the spectrum of the matrix. As is well-known, the spectrum does not characterize the graph, that is, there are nonisomorphic cospectral graphs. However, important properties of the graph stem from the knowledge of its spectrum. A summary of these relations
can be seen in Schwenk and Wilson~\cite{ScWi78} and, in a more extensive way, in Cvetkovi\'c,
Doob and Sachs~\cite{CvDoSa79} and Cvetkovi\'c, Doob, Gutman and Torga\v{s}ev~\cite{CvDoGuTo88}.

The perturbation of a graph $G$ is to be thought of as a local modification, such as the addition
or deletion of a vertex or an edge. The cases stu\-died here
are the addition of a vertex (together with some incident edges), an edge and a pendant edge. When we make the perturbation, the
spectrum changes and it is particulary interesting to study the behavior of the maximum eigenvalue $\lambda(G)$,
which is called spectral radius or index of $G$. For a comprehensive survey of results about this parameter, we refer the reader to Cvetkovi\'c  and Rowlinson~\cite{CvRo90}.
In particular, accurate bounds for $\lambda(G)$ were obtained, under some conditions, with the knowledge of the spectral radius, the associated eigenvector and the second eigenvalue. More details about these methods can be found in the survey by Rowlinson~\cite{Ro91}.

The bounds obtained here for the index of a perturbed graph are a bit less
precise than those in Rowlinson~\cite{Ro91}, but we believe that ours have two aspects of interest. First, they are derived from a mere knowledge of the degrees of the vertices involved in the perturbation. Second,
they are the best possible, in the sense that we characterize the cases in which the bounds are attained. Our approach is based on the study of some differential
inequalities, seeing the perturbation as a continuous process or, to be more precise, as a linear matrix perturbation.
Although the theory of matrix perturbations (see, for instance, the textbook by Stewart and Sun \cite{StSu90} or the chapter by Li \cite{Li}) has been commonly used in this context, to the authors' knowledge, our method has not been used before to bound the index of graphs under perturbations.

\section{Notation and Basic Concepts}
Our graphs are undirected, simple (without loops or multiple edges), connected and finite. The
graph $G=(V,E)$ has set of vertices $V$, with cardinality $n=|V|$, and set of edges $E$. The trivial graph  with only one vertex $u$ is denoted by $K_1=\{u\}$.
If  $G_1=(V_1,E_1)$ and $G_2=(V_2,E_2)$, then $G_1\cup G_2=(V_1\cup
V_2,E_1\cup E_2)$ and $G_1+G_2=(V_1\cup V_2,E_1
\cup E_2\cup E)$, where $E$ is the set of edges that join every vertex of $V_1$
with all vertices of $V_2$.
%A walk in  $G$ is a sequence  $v_1v_2\ldots v_k$ of
%vertices such that $v_iv_{i+1}$ is an edge
%for $i=1,\ldots,k-1$. The graph will be said to be connected if for any pair of
%vertices there exists a walk that connects them.
The adjacency matrix $\textbf{\emph{A}}=(a_{ij})$ of $G$ has entries $a_{ij}= 1$ if $u_iu_j\in E$ and $a_{ij}=0$ otherwise. We denote by $\textbf{\emph{j}}$ the (column) vector of $\mathbb{R}^n$ with all its
entries equal to $1$. Hence,  $\textbf{\emph{A}}\textbf{\emph{j}}$ is the vector of degrees $(\d_1,\d_2,\ldots,\d_n)^{\top}$. In
particular, $G$ is regular of degree $\d$ if and only if $\textbf{\emph{A}}\textbf{\emph{j}}=\d\textbf{\emph{j}}$.

A real matrix $\textbf{\emph{M}}=(m_{ij})$ is said to be nonnegative if $m_{ij}\geq 0$, for any $i,j$. We
say that $\matrixM$ is connected if, given any pair $i$ and $j$, there exists a sequence
$i_0,i_1,\ldots,i_r$ such that $i_0=i$, $i_r=j$ and $m_{i_{h-1}i_h}\neq
0$, for $h=1,2,\ldots,r$. Trivially, the adjacency matrix of a connected graph is symmetric, nonnegative
and connected.

The spectrum of a square matrix is the set of its eigenvalues in the complex plane.
The {\em spectral radius} is the maximum of the modulus of its eigenvalues.
If the matrix is the adjacency matrix of a graph, we call it the
{\em index} of the graph. A symmetric real matrix has only real eigenvalues,
which are numbered in nonincreasing order $\lambda_1\geq\lambda_2\geq\cdots\geq\lambda_{n}$.
Then, the spectral radius is the maximum of $|\lambda_1|$ and $|\lambda_{n}|$. Also,
the spectral radius can be defined as $\lambda=\textrm{sup}\left\{\|\textbf{\emph{A}}\textbf{\emph{x}}\|:\|\textbf{\emph{x}}\|=1\right\}$, and defines a norm
in the space of symmetric matrices. Then, $\|\textbf{\emph{A}}\textbf{\emph{u}}\|\leq \|\textbf{\emph{A}}\|\,\|\textbf{\emph{u}}\|$, for any vector $\textbf{\emph{u}}$, with
equality if and only if $\textbf{\emph{u}}$ is an eigenvector associated to an eigenvalue giving the spectral radius. For a connected nonnegative symmetric real matrix, the theorem of
Perron-Frobenius states the following:
\begin{enumerate}
  \item The first eigenvalue  equals the spectral radius $\lambda_1=\lambda$.
  \item The eigenvalue $\lambda_1$ is a simple root of the characteristic polynomial.
  \item There is a unitary eigenvector $\textbf{\emph{x}}$ corresponding to $\lambda_1$ with strictly positive entries.
\end{enumerate}

%We will then use the inequality \ds \sqrt\Delta \leq \lambda\leq \Delta\), where $\Delta$ is the
%maximum degree of graph [2].

%Farem œs desprŽs de la desigualtat:
%\ds \sqrt\Delta \leq \lambda\leq \Delta\), on $\Delta$ Žs el grau mˆxim del
%graf [2].

\section{General Technique}

Let ${\cal S}^+$ (respectively, ${\cal S}^+_{\subC}$) be the subset of symmetric, nonnegative
(respectively, and connected) matrices of the space $M(n,n)$ of real $n\times n$ matrices.

When a perturbation modifies a graph into another, we denote by $G_{\subI}$ the initial graph and
by $G_{\subF}$ the final graph. Similarly, if $\A_{\subI}$ and $\textbf{\emph{A}}_{\subF}$ are the adjacency matrices of the graphs $G_{\subI}$ and $G_{\subF}$ on $n$ vertices, we say that $\textbf{\emph{A}}_{\subF}$ is obtained from $\textbf{\emph{A}}_{\subI}$ by the perturbation
$\textbf{\emph{P}}=\textbf{\emph{A}}_{\subF}-\textbf{\emph{A}}_{\subI}$.

If $G_{\subF}$ is connected, then the matrices $\A(t)=\textbf{\emph{A}}_{\subI}+t\textbf{\emph{P}}$ belong to ${\cal S}^+_{\subC}$ for every
$t\in(0,1]$. Similarly, if $G_{\subI}$ is connected, then $\A(t)\in {\cal S}^+_{\subC}$ for
$t\in[0,1)$. Also, if $G_{\subF}$ is connected and the perturbation matrix $\textbf{\emph{P}}\in {\cal S}^+$,
then $\textbf{\emph{A}}_{\subI}+t\textbf{\emph{P}}\,\in {\cal S}^+_{\subC}$ for $t\in(0,\infty)$.

If $\textbf{\emph{A}}$ and $\textbf{\emph{P}}$ are symmetric matrices, there exist continuous real functions $\mu_1(t)$, $\mu_2(t)$, $\ldots$, $\mu_n(t)$, and continuous vectorial functions $\x_1(t),\x_2(t),\ldots,\x_n(t)$ that are,
respectively, the eigenvalues of $\A(t)=\textbf{\emph{A}}+t\textbf{\emph{P}}$ and their associated eigenvectors. From the implicit function
theorem, if $\mu_i(t_0)$ is a simple eigenvalue, then $\mu_i$ is a ${\cal C}^1$-function
in a neighborhood of $t_0$. Therefore, if $\A(t)\in {\cal S}^+_{\subC}$ for $t$ belonging to an
interval $I$, the spectral radius is a continuously differentiable function in $I$. In the
three results that we present, the perturbation matrix $\textbf{\emph{P}}$ belongs to ${\cal S}^+$ and the
perturbed matrix $\textbf{\emph{A}}_{\subF}=\textbf{\emph{A}}_{\subI}+\textbf{\emph{P}}$ to ${\cal S}^+_{\subC}$. Thus, the normalized positive eigenvector $\x(t)$ associated with the spectral radius $\lambda(t)$ of the matrix $\textbf{\emph{A}}(t)=\textbf{\emph{A}}_{\subI}+t\textbf{\emph{P}}$ is a ${\cal C}^1(0,\infty)$-function, which can be extended with continuity to $[0,\infty)$, but now $\x(t)$ might have lost the strictly positive character of its entries.

Our technique is based on the following result:

\begin{lema}
\label{lema0}
Let $\x(t)=(\alpha_1,\alpha_2,\ldots,\alpha_n)^{\top}$ be the normalized $\lambda(t)$-eigenvector of the matrix $\A(t)=\A_{\subI}+t\P$ with $\P=(p_{ij})$. Then,
\begin{equation}
  \label{fonamental}
  \lambda'=\langle\P\x,\x\rangle=\sum_{i,j=1}^{n}p_{ij}\alpha_i\alpha_j.
 \end{equation}
\end{lema}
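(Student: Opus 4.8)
The plan is to treat the eigenvalue identity $\A(t)\x(t)=\lambda(t)\x(t)$, valid throughout the interval on which $\x$ is the normalized eigenvector associated with the spectral radius, as a relation between $\mathcal{C}^1$-functions of $t$ and simply differentiate it. The preceding discussion guarantees that this is legitimate: since the spectral radius is a \emph{simple} eigenvalue by Perron--Frobenius, both $\lambda(t)$ and $\x(t)$ are $\mathcal{C}^1$ on the relevant interval, so term-by-term differentiation is justified. Because $\A(t)=\A_{\subI}+t\P$ is affine in $t$, we have $\A'(t)=\P$, and differentiating the eigenvalue equation gives
\begin{equation*}
\P\x+\A\x'=\lambda'\x+\lambda\x'.
\end{equation*}

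Next I would take the inner product of both sides with $\x$ and invoke the normalization $\langle\x,\x\rangle=1$, which turns the first term on the right-hand side into exactly $\lambda'$. This leaves
\begin{equation*}
\langle\P\x,\x\rangle+\langle\A\x',\x\rangle=\lambda'+\lambda\langle\x',\x\rangle.
\end{equation*}
The crux is to dispose of the two terms involving the unknown derivative $\x'$. Here the symmetry of $\A(t)$ is decisive: since $\A=\A^{\top}$ and $\A\x=\lambda\x$, one has $\langle\A\x',\x\rangle=\langle\x',\A\x\rangle=\lambda\langle\x',\x\rangle$, so the second term on the left cancels the last term on the right. What remains is precisely $\langle\P\x,\x\rangle=\lambda'$, which is \eqref{fonamental} upon expanding the inner product as $\sum_{i,j}p_{ij}\alpha_i\alpha_j$.

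I do not anticipate a genuine obstacle: the computation is short, and the only delicate point is the regularity of $\x(t)$, which the paper has already established via simplicity of the eigenvalue and the implicit function theorem. It is worth emphasizing that the argument never requires computing $\x'$ explicitly; the symmetry of $\A$ renders the eigenvector's first-order variation invisible to the eigenvalue's first-order change, which is exactly what makes \eqref{fonamental} so clean. Alternatively, differentiating $\langle\x,\x\rangle=1$ gives $\langle\x',\x\rangle=0$, so both $\x'$-terms vanish outright rather than merely cancel; I would nonetheless prefer the symmetry argument, since it does not even invoke the normalization for that step and thus isolates the structural reason behind the formula.
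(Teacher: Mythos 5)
Your proposal is correct and follows essentially the same route as the paper: differentiate the eigenvalue equation, take the inner product with $\x$, and eliminate the $\x'$-terms using the symmetry of $\A$ and the normalization $\langle\x,\x\rangle=1$. The only cosmetic difference is that you let the two $\x'$-terms cancel against each other via symmetry, whereas the paper observes that each vanishes individually (since $\langle\x',\x\rangle=0$); both are valid and amount to the same computation.
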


\begin{proof}
By differentiating the expression $\textbf{\emph{A}}\x=(\A_{\subI}+t\P)\x=\lambda\x$, we get
$$
\P\x+\textbf{\emph{A}}\x'=\lambda'\x+\lambda \x'.
$$
Then, the result follows by taking the inner product by $\x$ and
observing that, from $\langle \x,\x\rangle=1$, we have
$\langle \x',\x\rangle=0$ and $\langle\A\x',\x\rangle= \langle \x',\A\x\rangle
=\lambda\langle \x',\x\rangle=0$.
\end{proof}

A first remark is that if  $\P\in {\cal S}^+$ and  $\A_{\subF}=\A_{\subI}+\P\in {\cal S}^+_{\subC}$, then the spectral radius increases strictly and, in particular, $\lambda_{\subI}=\lambda(0)<\lambda(1)=\lambda_{\subF}$. Also, since there exists
$\lim_{t\rightarrow 0^+}\lambda(t)=
\sum_{i,j=1}^{n}p_{ij}\alpha_i(0)\alpha_j(0)$, by the mean value theorem, we have that
$\lambda$ is also differentiable at $0$ with
$\displaystyle \lambda'(0)= \sum_{i,j=1}^{n}p_{ij}\alpha_i(0)\alpha_j(0)$.

We present three results of bounds of the index of a graph for the
following perturbations: connecting an isolated vertex, adding an edge and adding a pendant edge.
Starting from Eq.~(\ref{fonamental}), we give differential inequalities with
information on the degrees of the vertices involved, and we characterize the case when they become equations.
Solving these equations, we reach our conclusions by using the following result on differential
inequalities (see Szarski~\cite{Sz67}):

\begin{lema}
\label{lema1}
Let $A$ be an open convex subset of $\mathbb{R}^2$ and let $f:A\rightarrow\mathbb{R}$, $(t,x)\mapsto f(t,x)$, be a continuous function with $\frac{\partial f}{\partial x}$ continuous. Let $u,v:[t_0,\alpha)\rightarrow\mathbb{R}$ be continuously differentiable functions, such that:
\begin{enumerate}
\item
For all $t\in[t_0,\alpha)$, $(t,u(t))\in A$, $(t,v(t))\in A$.
\item
Function $u$ satisfies: $u'(t)=f(t,u(t))$ for all $t\in[t_0,\alpha)$, $u(t_0)=x_0$.
\item
Function $v$ satisfies: $v'(t)<f(t,v(t))$ for all $t\in(t_0,\alpha)$, $v(t_0)=x_0$, $v'(t_0)\leq f(t_0,v(t_0))$.
\end{enumerate}
Then, $v(t)<u(t)$ for all $\in(t_0,\alpha)$.
\end{lema}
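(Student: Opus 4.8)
The plan is to set $w := u - v$ and prove the single statement $w(t) > 0$ for all $t \in (t_0,\alpha)$; since $u,v$ are $\mathcal{C}^1$, so is $w$, and by hypothesis $w(t_0) = 0$. The whole argument rests on turning the difference $f(t,u) - f(t,v)$ into something controlled by $w$ itself. Fix any $T \in (t_0,\alpha)$. Because $u$ and $v$ are continuous on $[t_0,T]$ and $A$ is open, their graphs lie in a compact subset of $A$; convexity of $A$ guarantees that for each $s$ the vertical segment joining $(s,v(s))$ and $(s,u(s))$ stays in $A$, so the mean value theorem gives
$$
f(s,u(s)) - f(s,v(s)) = c(s)\, w(s), \qquad s \in [t_0,T],
$$
where $c(s) = \frac{\partial f}{\partial x}(s,\xi(s))$ for some intermediate $\xi(s)$. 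Since $\frac{\partial f}{\partial x}$ is continuous, it is bounded by some $L$ on the relevant compact set, so $|c(s)| \le L$ on $[t_0,T]$.

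First I would record the exact integral identity. Writing $m(s) := f(s,v(s)) - v'(s)$, hypothesis~(3) gives $m(s) > 0$ for $s \in (t_0,\alpha)$ and $m(t_0) \ge 0$, while hypothesis~(2) and the fundamental theorem of calculus yield
$$
w(t) = \int_{t_0}^{t}\big[f(s,u(s)) - v'(s)\big]\,ds = \int_{t_0}^{t}\big[c(s)\,w(s) + m(s)\big]\,ds .
$$
This is a linear Volterra equation for $w$ with bounded kernel and strictly positive forcing term $m$ on $(t_0,\alpha)$. Solving it with the integrating factor $\exp\!\big(\int_s^t c(r)\,dr\big)$ gives the representation
$$
w(t) = \int_{t_0}^{t} m(s)\,\exp\!\Big(\int_{s}^{t} c(r)\,dr\Big)\,ds ,
$$
whose integrand is strictly positive for every $s \in (t_0,t)$; hence $w(t) > 0$ for all $t \in (t_0,T]$, and as $T$ was arbitrary, for all $t \in (t_0,\alpha)$.

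Since the factor $c$ produced by the mean value theorem is only bounded (not obviously continuous), I would also present the conclusion by a fully elementary route closer to the comparison method of Szarski. The globalization is a ``first crossing'' argument: if $w$ failed to stay positive, let $t^*$ be the first point of $(t_0,\alpha)$ with $w(t^*) = 0$ and $w > 0$ on $(t_0,t^*)$. Then $w'(t^*) \le 0$, whereas $u(t^*) = v(t^*)$ forces
$$
w'(t^*) = f(t^*,u(t^*)) - v'(t^*) = f(t^*,v(t^*)) - v'(t^*) = m(t^*) > 0 ,
$$
a contradiction.

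The main obstacle is exactly what makes the endpoint condition $v'(t_0) \le f(t_0,v(t_0))$ necessary but not, on its own, sufficient: the crossing step needs $t^* > t_0$, i.e.\ one must first show $w > 0$ on a right neighborhood of $t_0$. When $w'(t_0) > 0$ this is immediate, but equality $v'(t_0) = f(t_0,v(t_0))$ is allowed, so $w'(t_0)$ may vanish and the sign of $w$ just past $t_0$ is not decided by the first derivative. I would settle this degenerate case with a short Gronwall estimate on the identity above: with $\rho(t) = \int_{t_0}^t m(s)\,ds > 0$, the bound $|w(t)| \le \rho(t) + L\int_{t_0}^t |w(s)|\,ds$ yields $|w(t)| \le \rho(t)\,e^{L(t-t_0)}$, and feeding this back gives $w(t) \ge \rho(t)\big[1 - L(t-t_0)e^{L(t-t_0)}\big] > 0$ for $t$ close enough to $t_0$. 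With $t^* > t_0$ thereby secured, the crossing contradiction closes the argument.
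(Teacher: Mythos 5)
Your proof is correct, but there is nothing in the paper to compare it with: the paper does not prove Lemma~\ref{lema1} at all, it simply quotes the result from Szarski's monograph \cite{Sz67}. So your argument is a self-contained substitute for that citation rather than a variant of an in-paper proof. Of your two routes, the second one (positivity near $t_0$ via Gronwall, then a first-crossing contradiction) is the one that stands on its own: it only uses the continuous quantity $|f(s,u(s))-f(s,v(s))|\le L\,|w(s)|$, the lower bound $w(t)\ge\rho(t)\bigl[1-L(t-t_0)e^{L(t-t_0)}\bigr]$ handles the degenerate case $w'(t_0)=0$ that a naive crossing argument misses, and the contradiction $0\ge w'(t^*)=m(t^*)>0$ closes it. The first route has the soft spot you yourself flagged: the factor $c(s)=\frac{\partial f}{\partial x}(s,\xi(s))$ produced by the mean value theorem is bounded but not obviously continuous or even measurable, so the representation $w(t)=\int_{t_0}^{t}m(s)\exp\bigl(\int_s^t c(r)\,dr\bigr)ds$ needs an extra word (e.g.\ set $c(s)=0$ where $w(s)=0$, which keeps the identity $w'=cw+m$ because $u(s)=v(s)$ there forces $w'(s)=m(s)$, and makes $c$ Borel measurable) before the integrating factor is legitimate. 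Two minor polishing points: the compact set carrying the Lipschitz constant $L$ should be named explicitly as $\{(s,\theta u(s)+(1-\theta)v(s)):s\in[t_0,T],\ \theta\in[0,1]\}$, compact and contained in $A$ by convexity; and your argument shows in passing that the hypothesis $v'(t_0)\le f(t_0,v(t_0))$ is redundant, since it follows from the strict inequality on $(t_0,\alpha)$ by continuity of $s\mapsto f(s,v(s))-v'(s)$.
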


%\vspace{.5 cm}

\section{Connection of an isolated vertex}
Our first result is on the change of the index of a graph when we connect an isolated vertex to some other vertices. For this case Rowlinson \cite{Ro92} computed the characteristic polynomial of the modified graph in terms of the characteristic polynomial of the initial graph and some entries of its idempotents (see also Cvetkovi\'c and Rowlinson \cite[p.90]{CvRo04} for a shorter proof).

\begin{theo}
Let $G_{\subI}=(V,E)$ be a graph with an isolated vertex $u$.
Given some vertices $v_1,v_2,\ldots,v_g$ different from $u$, we denote by
$G_{\subF}$ the graph $(V,E\cup\{uv_1,uv_2,\ldots,uv_g\})$, which is assumed to be
connected. If $\lambda_{\subI}$ and $\lambda_{\subF}$ are the spectral radii of
$G_{\subI}$ and $G_{\subF}$, respectively, then the following inequality holds:
$$
\lambda_{\subF}\leq H^{-1}(\lambda_{\subI}),
$$
where the function $H:(0,+\infty)\rightarrow \mathbb{R}$ is defined by $H(\xi)=\xi-\frac{g}{\xi}$.
The equality is satisfied if and only if $G_{\subF}=\{u\}+G$, with $G$ being a regular graph.
\end{theo}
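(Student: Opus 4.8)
The plan is to realize the perturbation as the one-parameter family $\A(t)=\A_{\subI}+t\P$, where $\P\in{\cal S}^+$ is the symmetric matrix with $p_{uv_i}=p_{v_iu}=1$ for $i=1,\dots,g$ and all other entries $0$; since $G_{\subF}$ is connected we have $\A_{\subF}=\A_{\subI}+\P\in{\cal S}^+_{\subC}$, so $\lambda(t)$ and its normalized positive eigenvector $\x(t)=(\alpha_1,\dots,\alpha_n)^\top$ are available for $t\in(0,1]$ as in Section~3. Writing $\alpha_u$ and $\alpha_{v_1},\dots,\alpha_{v_g}$ for the entries at the vertices touched by $\P$, Lemma~\ref{lema0} gives
\[
\lambda'(t)=\langle\P\x,\x\rangle=2\alpha_u\sum_{i=1}^{g}\alpha_{v_i}.
\]

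First I would extract a second relation from the eigenvalue equation itself. Because $u$ is isolated in $G_{\subI}$, its only neighbours in $\A(t)$ are $v_1,\dots,v_g$ with weight $t$, so the $u$-th row of $\A(t)\x=\lambda\x$ reads $t\sum_{i}\alpha_{v_i}=\lambda\alpha_u$. Substituting $\sum_i\alpha_{v_i}=\lambda\alpha_u/t$ into the expression for $\lambda'$ yields $\lambda'(t)=2\lambda\alpha_u^2/t$, which is now free of the individual neighbour entries. To turn this into a closed inequality in $\lambda$ and $t$ alone, I would bound $\alpha_u$ by Cauchy--Schwarz together with $\|\x\|=1$:
\[
\Big(\tfrac{\lambda\alpha_u}{t}\Big)^2=\Big(\sum_{i=1}^{g}\alpha_{v_i}\Big)^2\le g\sum_{i=1}^{g}\alpha_{v_i}^2\le g\,(1-\alpha_u^2),
\]
whence $\alpha_u^2\le gt^2/(\lambda^2+gt^2)$ and therefore the key differential inequality
\[
\lambda'(t)\le\frac{2gt\,\lambda}{\lambda^2+gt^2},\qquad t\in(0,1].
\]

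To integrate this I would compare $\lambda$ with the solution $w$ of the associated equation $w'=\tfrac{2gtw}{w^2+gt^2}$, $w(0)=\lambda_{\subI}$, via Lemma~\ref{lema1}; equivalently, and more directly, I would check that $\Phi(t):=\lambda(t)-gt^2/\lambda(t)$ satisfies $\Phi'(t)=\lambda'\,\tfrac{\lambda^2+gt^2}{\lambda^2}-\tfrac{2gt}{\lambda}\le 0$ exactly by the inequality above, so $\Phi$ is non-increasing on $(0,1]$. Since $\Phi(t)\to\lambda_{\subI}$ as $t\to0^+$ and $H(\xi)=\xi-g/\xi$ is increasing, this gives $H(\lambda_{\subF})=\Phi(1)\le\lambda_{\subI}$, i.e.\ $\lambda_{\subF}\le H^{-1}(\lambda_{\subI})$. (One checks $w(1)=H^{-1}(\lambda_{\subI})$ from the same first integral $w-gt^2/w\equiv\lambda_{\subI}$.)

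The delicate part, and the one I would spend most care on, is the equality characterization. Equality in $\lambda_{\subF}=H^{-1}(\lambda_{\subI})$ forces $\Phi'\equiv0$, hence equality in both Cauchy--Schwarz steps for every $t\in(0,1]$. The bound $\sum_i\alpha_{v_i}^2\le 1-\alpha_u^2$ is tight only when $\alpha_k=0$ for every vertex $k\notin\{u,v_1,\dots,v_g\}$; but for $t>0$ Perron--Frobenius makes all entries of $\x(t)$ strictly positive, so there can be no such vertex, i.e.\ $u$ is joined to all of $V\setminus\{u\}$ and $g=n-1$. The bound $(\sum_i\alpha_{v_i})^2\le g\sum_i\alpha_{v_i}^2$ is tight only when all $\alpha_{v_i}$ are equal; feeding this into the $v_i$-rows of $\A(t)\x=\lambda\x$ (where all non-$u$ entries now coincide) forces each $v_i$ to have the same degree in $G_{\subF}-u$, i.e.\ $G:=G_{\subF}-u$ is regular. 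Conversely, for $G_{\subF}=\{u\}+G$ with $G$ being $\delta$-regular a direct computation gives $\lambda_{\subF}^2-\delta\lambda_{\subF}-g=0$ and $\lambda_{\subI}=\delta$, so $H(\lambda_{\subF})=(\lambda_{\subF}^2-g)/\lambda_{\subF}=\delta=\lambda_{\subI}$, confirming equality. The main obstacle is thus not the inequality but making the two tightness conditions yield precisely the structural conclusion ``$G_{\subF}=\{u\}+G$ with $G$ regular''.
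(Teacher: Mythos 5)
Your proposal is correct, and its first half coincides with the paper's own proof: the same one\-/parameter family $\A(t)=\A_{\subI}+t\P$, the same use of Lemma~\ref{lema0}, the same row equation $t\sum_i\alpha_{v_i}=\lambda\alpha_u$, and the same Cauchy--Schwarz estimate, yielding the identical differential inequality $\lambda'\le 2gt\lambda/(\lambda^2+gt^2)$ (your two-step Cauchy--Schwarz is the paper's single estimate $\langle\z,\w\rangle^2\le g\,\|\z\|^2$ split into its two tightness conditions). Where you genuinely depart from the paper is in the integration and the equality analysis. The paper solves the associated Cauchy problem explicitly and invokes Szarski's comparison result (Lemma~\ref{lema1}); since that lemma needs a \emph{strict} inequality on the open interval, the paper must first prove an all-or-nothing dichotomy: equality at a single $t_0$ forces $G_{\subF}=\{u\}+G$ with $G$ regular, and then, by solving the parametric eigensystem, equality holds for every $t$. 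You bypass both the comparison lemma and the dichotomy by observing that the differential inequality says exactly that the first integral $\Phi(t)=\lambda(t)-gt^2/\lambda(t)$ is non-increasing, so $H(\lambda_{\subF})=\Phi(1)\le\lim_{t\to0^+}\Phi(t)=\lambda_{\subI}$ at once; equality then forces $\Phi$ to be constant, hence equality in both Cauchy--Schwarz steps at \emph{every} $t$, from which you correctly extract the structure (the $v_i$ exhaust $V\setminus\{u\}$ because all entries of $\x(t)$ are positive, and the $v_i$-rows force $G$ to be regular), and your converse is a two-line computation ($\lambda_{\subI}=\delta$ and $\lambda_{\subF}^2-\delta\lambda_{\subF}-g=0$, so $H(\lambda_{\subF})=\delta$) instead of the paper's explicit parametric eigenvector formulas. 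Your route is more elementary and self-contained; what the paper's machinery buys is uniformity, since the same Lemma~\ref{lema1} template is reused verbatim in the edge and pendant-edge theorems, where the corresponding first integrals are far less transparent. One shared technicality, affecting the paper as much as you: if $G_{\subI}$ has no edges then $\lambda_{\subI}=0$, and the continuity of $\Phi$ at $0^+$ (respectively, the hypotheses of Lemma~\ref{lema1} at the corner $(0,0)$) needs the extra remark that $\lambda(t)\ge t\sqrt{g}$.
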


\begin{proof}
Let $n+1$ be the order of the graphs $G_{\subI}$ and $G_{\subF}$.
The continuous perturbation of the matrix associated with $G_{\subI}$ that produces the matrix associated with $G_{\subF}$ can be described by
$$
\A(t)=\A_{\subI}+t\P=
            \left(\begin{array}{cccc}
                     0 & 0 & \cdots & 0 \\
                     0 & & & \\
                     \vdots & & \textbf{\emph{C}} & \\
                     0 & & &
                   \end{array}\right)
            + t
            \left(\begin{array}{cccc}
                     0 & \cdots & \w^{\top} & \cdots \\
                     \vdots & & & \\
                     \w & & \O & \\
                     \vdots & & &
                   \end{array}\right), \qquad t\in [0,1],
$$
where $\w$ is the column binary vector associated with the perturbation
and $\textbf{\emph{C}}$ is the adjacency matrix of graph $G_{\subI}-\{u\}$.
Note that, for any $t\in (0,1]$, the matrix $\A(t)$ is nonnegative and connected.
Let $\lambda(t)$ be the spectral radius of $\A(t)$. Let
$\x(t)=(\alpha|\z)^{\top}=(\alpha,z_1,z_2,\ldots,z_n)^{\top}$
be its normalized positive eigenvector. Then, by Eq. (\ref{fonamental}),
$$
\lambda'=\langle\P\x,\x\rangle=2\alpha\langle\z,\w\rangle.
$$
From $\A(t)\x(t)=\lambda (t)\x(t)$, we have
\begin{equation}
\label{Ax=lx}
\left(
\begin{array}{cc}
0 & t\w^{\top} \\
t\w & \C
\end{array}
\right)
\left(
\begin{array}{c}
\alpha \\
\z
\end{array}
\right)
=
\left(
\begin{array}{c}
t\langle \w, \z\rangle \\
t\alpha\w+\C\z
\end{array}
\right)
=
\left(
\begin{array}{c}
\lambda\alpha \\
\lambda\z
\end{array}
\right),
\end{equation}
and the first scalar equation  gives
\begin{equation}\label{desigualtatvertex1}
      \lambda^2\alpha^2=t^2\langle\z,\w\rangle^2\leq t^2\|\z\|^2g=
      t^2(1-\alpha^2)g.
\end{equation}
Hence,
\begin{equation}\label{desigualtatvertex2}
     \lambda'=2\lambda\frac{\alpha^2}{t}\leq\frac{2gt\lambda}{\lambda^2+gt^2}.
\end{equation}
The inequalities $(\ref{desigualtatvertex1})$ and $(\ref{desigualtatvertex2})$ are either equalities or strict inequalities in the whole interval $(0,1]$.
%are either equalities or strict inequalities.
Indeed, if the equalities are satisfied for $t_0$,
then $\z(t_0)$, which has only positive entries, would be
proportional to $\w$, which is not null. Therefore, $\w=\j$ and $\z(t_0)=\beta\textbf{\emph{j}}$.
Hence, at $t=t_0$ the last $n$ equations of
(\ref{Ax=lx})  become
$\textbf{\emph{C}}\j=\left(\lambda-t_0\frac{\alpha}{\beta}\right)\j$, where $\alpha=\alpha(t_0)$, so that $G_{\subI}=\{u\}\cup G$, $G_{\subF}=\{u\}+G$, and $G$ is a regular graph. To conclude that, in this situation,
$(\ref{desigualtatvertex2})$ is an equality for all $t\in(0,1]$, let us study the existence of solutions to the following system:
$$
\left(\begin{array}{cccc}
             0 & t & \cdots & t \\
              t & & & \\
              \vdots & & \textbf{\emph{C}} &\\
              t & & &
            \end{array}\right)
      \left(\begin{array}{c}
              \alpha \\
              \beta \\
              \vdots \\
              \beta
            \end{array}\right)=\lambda
      \left(\begin{array}{c}
              \alpha \\
              \beta \\
              \vdots \\
              \beta
            \end{array}\right), \qquad
      \alpha^2+n\beta^2=1.
$$
Then, for all $t$, we obtain the solution:
$$
\lambda=\frac{\delta}{2}+\sqrt{\frac{\delta^2}{4}+nt^2},\qquad
        \alpha=\sqrt{\frac{\lambda-\delta}{2\lambda-\delta}},\qquad
        \beta=\sqrt{\frac{\lambda}{n(2\lambda-\delta)}},
$$
where $\delta=\lambda-t\frac{\alpha}{\beta}$ denotes the degree of $G$.

%\begin{figure}[htb]
%        \vspace{8.5 cm}\hspace{1.8 cm}
%        \special{postscriptfile DIB1art1.ps}
%        \caption{Bound when an isolated vertex is joined.}
%      \end{figure}

%\vspace{.5 cm}

Now we have the following cases, where $f(t,\lambda)=\frac{2gt\lambda}{\lambda^2+gt^2}$:
\begin{itemize}
 \item[$(a)$]
 $\lambda'=f(t,\lambda)$ for all $t\in [0,1]$,
 $\lambda(0)=\lambda_{\subI}$, if $G_{\subF}=\{u\}+ G$, with
 $G$ being a regular graph.
 \item[$(b)$]
 $\lambda'<f(t,\lambda)$ for all $t\in (0,1]$,
 $\lambda'(0)=f(0,\lambda(0))$,
 $\lambda(0)=\lambda_{\subI}$, in any other case.
\end{itemize}

The Cauchy problem
$$
y'=\frac{2gty}{y^2+gt^2}, \qquad y(0)=\lambda_{\subI},
$$
can be solved by making the changes $y=\sqrt{rs}$ and $t=\sqrt{s}$, so giving
$$
y^2(t)-\lambda_{\subI}y(t)-gt^2=0.
$$
Hence,
$$
y(1)-\frac{g}{y(1)}=\lambda_{\subF}-\frac{g}{\lambda_{\subF}}=\lambda_{\subI}
$$
and, introducing the bijection $H:(0,+\infty)\rightarrow\mathbb{R}$,
$\displaystyle H(\xi)=\xi-\frac{g}{\xi}$, the theorem follows from Lemma~\ref{lema1}.
\end{proof}

\section{Addition of an edge}
The second result that we present is on the change of the index when we add an edge to a graph.
In this context, Rowlinson \cite{Ro88} proved that, under some conditions, the index of the perturbed graph can be determined by the eigenvalues of the original graph together with some of its angles.
Moreover, some upper and lower bounds for such an index were given by Maas \cite{Ma87}.

\begin{theo}
Let $G_{\subI}=(V,E)$ be a graph and let $u,v \in V$ be two nonadjacent vertices with degrees $\delta_u,\delta_v$.
Let $G_{\subF}=(V,E \cup \{uv\})$, which we assume to be connected.
If $\lambda_{\subI}$ and $\lambda_{\subF}$ are, respectively, the indices of
$G_{\subI}$ and $G_{\subF}$, then
$$
\lambda_{\subF}\leq1+K^{-1}(K(\lambda_{\subI})-1),
$$
where $K:(0,\infty)\rightarrow\mathbb{R}$ is defined as
$K(\xi)=\xi-\frac{\delta_u+\delta_v}{\xi}$.
The equality is satisfied if and only if $G_{\subI}=(\{u\}\cup\{v\})+G$, where $G$
is a regular graph.
\end{theo}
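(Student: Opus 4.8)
The plan is to mirror the structure of the previous theorem, now with a perturbation that adds a single edge $uv$. First I would set up the continuous perturbation $\A(t)=\A_{\subI}+t\P$, where $\P$ is the symmetric binary matrix with $1$'s in positions $(u,v)$ and $(v,u)$ and zeros elsewhere, so that $\A(1)=\A_{\subF}$. Writing $\x(t)=(\alpha_1,\ldots,\alpha_n)^\top$ for the normalized positive $\lambda(t)$-eigenvector and letting $\alpha_u,\alpha_v$ denote the entries at the two endpoints, Lemma~\ref{lema0} gives directly
$$
\lambda'=\langle\P\x,\x\rangle=2\alpha_u\alpha_v.
$$
The goal is then to bound $\alpha_u\alpha_v$ from above by an expression in $t$, $\lambda$, and the degrees $\delta_u,\delta_v$, exactly as the previous proof bounded $2\alpha\langle\z,\w\rangle$.

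The key step is extracting the eigenvalue equations at $u$ and $v$ from $\A(t)\x=\lambda\x$. The row at $u$ reads $\sum_{w\sim u}\alpha_w + t\alpha_v=\lambda\alpha_u$, and symmetrically at $v$. The sum $\sum_{w\sim u}\alpha_w$ runs over the $\delta_u$ neighbors of $u$ in $G_{\subI}$, so by Cauchy--Schwarz it is at most $\sqrt{\delta_u}\,\bigl(\sum_{w\sim u}\alpha_w^2\bigr)^{1/2}\le\sqrt{\delta_u}$, since $\|\x\|=1$. This yields $\lambda\alpha_u - t\alpha_v\le\sqrt{\delta_u}$ and $\lambda\alpha_v-t\alpha_u\le\sqrt{\delta_v}$. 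The natural move is to multiply these two inequalities, or to combine them so as to isolate $\alpha_u\alpha_v$ and produce a differential inequality $\lambda'\le f(t,\lambda)$ in which only $\delta_u+\delta_v$ survives; the shifted variable $\lambda-1$ (reflecting the $1+K^{-1}(\cdots)$ form of the bound) suggests that after adding the edge the relevant quantity behaves like the single-vertex case with $g$ replaced by $\delta_u+\delta_v$ and $\lambda$ replaced by $\lambda-1$. Concretely, I expect the resulting Cauchy problem to be, after the substitution $\eta=\lambda-1$,
$$
\eta'=\frac{2(\delta_u+\delta_v)\,t\,\eta}{\eta^2+(\delta_u+\delta_v)t^2},
$$
solvable by the same change of variables $\eta=\sqrt{rs}$, $t=\sqrt{s}$ used before, giving the implicit relation $K(\lambda)-1 = K(\lambda_{\subI})$ at $t=0$ and hence $\lambda_{\subF}\le 1+K^{-1}(K(\lambda_{\subI})-1)$ via Lemma~\ref{lema1}.

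The equality analysis proceeds as in the first theorem: equality in Cauchy--Schwarz at some $t_0$ forces $\x$ to be constant on the neighborhoods of $u$ and of $v$, and since $\x$ is strictly positive this propagates to show that $G$ (the graph on $V\setminus\{u,v\}$, or whatever the appropriate induced subgraph is) is regular and that $u,v$ are joined to all its vertices, i.e. $G_{\subI}=(\{u\}\cup\{v\})+G$ with $G$ regular. As before, one then checks that in this symmetric configuration the inequalities are in fact equalities for \emph{all} $t$ by exhibiting an explicit eigenvector, so that the two mutually exclusive cases $(a)$ $\lambda'=f(t,\lambda)$ everywhere and $(b)$ $\lambda'<f(t,\lambda)$ on $(t_0,1]$ hold, and Lemma~\ref{lema1} delivers the strict inequality in case $(b)$.

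The main obstacle is the bookkeeping when combining the two endpoint inequalities into a single clean differential inequality depending only on $\delta_u+\delta_v$: unlike the single-vertex case, here there are two coupled scalar equations and two distinct degrees, so some care is needed to symmetrize them and show that the worst case collapses to the symmetric profile $\alpha_u=\alpha_v$, $\delta_u=\delta_v$. I would expect that the correct way to handle this is to add the two inequalities (rather than multiply) and bound $\alpha_u+\alpha_v$ or $\alpha_u\alpha_v$ by working with $(\lambda-t)(\alpha_u+\alpha_v)\le\sqrt{\delta_u}+\sqrt{\delta_v}$ together with a concavity/Cauchy--Schwarz estimate $\sqrt{\delta_u}+\sqrt{\delta_v}\le\sqrt{2(\delta_u+\delta_v)}$, verifying that equality throughout forces $\delta_u=\delta_v$ and $\alpha_u=\alpha_v$, which is exactly the regular, symmetric configuration characterized above.
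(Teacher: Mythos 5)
Your setup, the derivative formula $\lambda'=2\alpha_u\alpha_v$, and the outline of the equality analysis all match the paper, but the analytic core of your argument has a genuine gap: the Cauchy--Schwarz step is too lossy. You bound $\sum_{w\sim u}\alpha_w\le\sqrt{\delta_u}\bigl(\sum_{w\sim u}\alpha_w^2\bigr)^{1/2}\le\sqrt{\delta_u}$, discarding the norm of the restricted vector. The paper instead keeps $\sum_{w\sim u}\alpha_w=\langle\w_u,\z\rangle\le\sqrt{\delta_u}\,\|\z\|$, where $\z$ is the restriction of $\x$ to $V\setminus\{u,v\}$, so that $\|\z\|^2=1-\alpha_u^2-\alpha_v^2$; moreover it combines the two endpoint equations not by adding them but by writing them as $\matrixM(\alpha_u,\alpha_v)^{\top}=(r,s)^{\top}$ with $\matrixM=\left(\begin{smallmatrix}\lambda & -t\\ -t & \lambda\end{smallmatrix}\right)$ and using $\|\matrixM^{-1}\|=\frac{1}{\lambda-t}$. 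This yields
$$
2\alpha_u\alpha_v\le\alpha_u^2+\alpha_v^2\le\frac{(1-\alpha_u^2-\alpha_v^2)(\delta_u+\delta_v)}{(\lambda-t)^2},
\qquad\mbox{hence}\qquad
\lambda'\le\frac{\delta_u+\delta_v}{(\lambda-t)^2+\delta_u+\delta_v}.
$$
The extra $+(\delta_u+\delta_v)$ in the denominator comes precisely from the factor $1-\alpha_u^2-\alpha_v^2$ you dropped. Your version only gives $\lambda'\le\frac{\delta_u+\delta_v}{(\lambda-t)^2}$; since this right-hand side is pointwise larger, its comparison solution is strictly larger, and Lemma~\ref{lema1} would then prove a bound strictly weaker than the one claimed. (The matrix formulation also dissolves the ``bookkeeping'' obstacle you flag at the end: no symmetrization $\sqrt{\delta_u}+\sqrt{\delta_v}\le\sqrt{2(\delta_u+\delta_v)}$ is needed, because the two degrees enter only through $r^2+s^2\le\|\z\|^2(\delta_u\cos^2\varphi_u+\delta_v\cos^2\varphi_v)\le\|\z\|^2(\delta_u+\delta_v)$.)

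The second gap is that your guessed comparison ODE is not the right one and does not deliver the stated bound. The correct Cauchy problem is $y'=\frac{\delta_u+\delta_v}{(y-t)^2+\delta_u+\delta_v}$, $y(0)=\lambda_{\subI}$; substituting $w=y-t$ (a shift by the moving parameter $t$, not by the constant $1$) gives $w'=-\frac{w^2}{w^2+\delta_u+\delta_v}$, which integrates to $K(y(t)-t)=K(\lambda_{\subI})-t$, whence $y(1)=1+K^{-1}(K(\lambda_{\subI})-1)$. By contrast, your ODE $\eta'=\frac{2(\delta_u+\delta_v)t\eta}{\eta^2+(\delta_u+\delta_v)t^2}$ with $\eta=\lambda-1$, solved as in the isolated-vertex theorem, gives $K(\eta(1))=\eta(0)=\lambda_{\subI}-1$, i.e.\ the bound $\lambda_{\subF}\le 1+K^{-1}(\lambda_{\subI}-1)$, which is different from (and weaker than) $1+K^{-1}(K(\lambda_{\subI})-1)$, since $K(\lambda_{\subI})-1<\lambda_{\subI}-1$. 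So the implicit relation ``$K(\lambda)-1=K(\lambda_{\subI})$'' you assert does not follow from your ODE: the appearance of $K(\lambda_{\subI})$ on the right side is a consequence of the $(y-t)$ structure and of the extra $+(\delta_u+\delta_v)$ in the denominator, both of which your derivation loses. The equality discussion is in the right spirit, but note that in the paper tightness at some $t_0$ requires three simultaneous conditions --- $\alpha=\beta$, $\cos\varphi_u=\cos\varphi_v=1$ (so $\w_u=\w_v=\j$), and $(r,s)$ proportional to the eigenvector $(1,1)$ of $\matrixM^{-1}$, which forces $\delta_u=\delta_v$ --- and these only make sense once the sharp inequality chain above is in place.
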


\begin{proof}
       Let $n+2$ be the order of graphs $G_{\subI}$ and $G_{\subF}$ with adjacency matrices $\A_{\subI}$ and $\A_{\subF}$,
    respectively.
    In the language of perturbations, we can consider that $A_{\subI}$ and $A_{\subF}$ are related by $\A_{\subF}=\A_{\subI}+\P$, where
    $\P=(p_{ij})$ has entries $p_{12}=p_{21}=1$ and $p_{ij}=0$ otherwise (if necessary, we rearrange the
    vertices so that
    $v_1=u$ and $v_2=v$). Considering the continuous perturbation, let us consider the uniparametric family of
    matrices
    $$
    \A(t)=\A_{\subI}+t\P=
      \left(
      \begin{array}{ccccc}
       0 & t & \cdots &  \w_{u}^\top  & \cdots \\
       t & 0 & \cdots &  \w_{v}^\top  & \cdots \\
       \vdots & \vdots & & & \\
        \w_u &  \w_v & & \textbf{\emph{C}} & \\
       \vdots & \vdots & & &
      \end{array}
      \right),
      \qquad t\in [0,1],
    $$
where $\w_{u},\w_{v}\in \{0,1\}^n$ and $\textbf{\emph{C}}$ is the $n\times n$ adjacency matrix of the subgraph $G_{\subI}-\{u\}-\{v\}$.

Let $\lambda(t)$ be the spectral radius of $\A(t)$, which is a continuous function on $t$
for $t\in [0,1]$, and is differentiable for $t\in (0,1]$ by the connectedness of $\A(t)$.

Now, with $\x(t)=(\alpha,\beta|\z)^{\top}=(\alpha,\beta,z_1,z_2,\ldots,z_n)^{\top}$, Eq. $(\ref{fonamental})$ becomes
$$
\lambda'=\langle\P\x,\x\rangle=2\alpha\beta.
$$

Considering the first two entries of $(\lambda(t)\textbf{\emph{I}}-\A(t))\x(t)=\vec0$,
we get the system
$$
   \textbf{\emph{M}} \left(\begin{array}{c}
                  \alpha \\
                  \beta
                \end{array}
                \right) =
                \left(
                \begin{array}{c}
                r \\
                s
                \end{array}
                \right),
 $$
 with
 $$
 \textbf{\emph{M}}=\left(\begin{array}{cc}
 \lambda & -t \\
 -t      & \lambda
 \end{array}
 \right),
 \qquad
 r=\langle \w_{u},\z\rangle, \qquad s=\langle \w_{v},\z\rangle.
 $$
 Introducing the angles $\varphi_u$ and $\varphi_v$ that the vectors
 $\w_u$ and $\w_v$ form with $\z$, we can write
\begin{eqnarray}
\alpha^2+\beta^2 &=& \left\|\textbf{\emph{M}}^{-1}\left(
\begin{array}{c}
r \\
s
\end{array}\right)\right\|^2\leq
\left\|\textbf{\emph{M}}^{-1}\right\|^2(r^2+s^2) \nonumber \\
&=& \|\z\|^2 \frac{(\delta_u\cos^2\varphi_u+\delta_v\cos^2\varphi_v)}
{(\lambda-t)^2}\nonumber \\
& \leq & \frac{1-\alpha^2-\beta^2}{(\lambda-t)^2}(\delta_u+\delta_v),
\label{desigualtat21}
\end{eqnarray}
since $\frac{1}{\lambda-t}$ is the maximum eigenvalue of $\textbf{\emph{M}}^{-1}$ (to which eigenvector $(1,1)$ is associated).
Then,
\begin{equation}
2\alpha\beta
\leq
\alpha^2+\beta^2
\leq
\frac{\delta_u+\delta_v}{(\lambda -t)^2+\delta_u+\delta_v}.
\label{desigualtat22}
\end{equation}

Therefore, the spectral radius of $\A(t)$ satisfy the following differential inequality:
\begin{equation}
\lambda'\leq\frac{\delta_u+\delta_v}{(\lambda-t)^2+\delta_u+\delta_v}, \qquad
\lambda(0)=\lambda_{\subI}.\label{inequaciodiferencial2}
\end{equation}

We now prove that, in the interval $(0,1]$, expression $(\ref{inequaciodiferencial2})$ is
always an equality or a strict inequality. Let us assume that there exists
$t_0\in(0,1]$ such that $(\ref{inequaciodiferencial2})$ is an equality. Observing $(\ref{desigualtat22})$,
we see that the first inequality
is equivalent to $\alpha=\beta$ and the second one
to both equalities in
$(\ref{desigualtat21})$.
The first one occurs if $\sqrt{\delta_u}\cos\varphi_u=\sqrt{\delta_v}\cos\varphi_v$ and the
second if $\cos\varphi_u=\cos\varphi_v=1$. Therefore, the equality in
$(\ref{inequaciodiferencial2})$ is valid for a value $t_0$ when the
following conditions are simultaneously satisfied:
$$
\delta_u=\delta_v, \qquad \cos\varphi_u=\cos\varphi_v=1, \qquad \alpha=\beta.
$$

As all the entries of $\z$ are different from zero and $\w_u,\w_v$
are not null vectors, then it follows that $\w_{u}= \w_{v}=\textbf{\emph{j}}$ and $\x(t_0)=
(\alpha,\alpha,\gamma,\stackrel{(n)}{\ldots},\gamma)^{\top}$. The last
$n$ entries of $\A(t_0)\x(t_0)=\lambda\x(t_0)$ give
$2\alpha\textbf{\emph{j}}+\gamma\textbf{\emph{C}}\textbf{\emph{j}}=\lambda\gamma\textbf{\emph{j}}$,
that is,
$$
\textbf{\emph{C}}\textbf{\emph{j}}=\left(\lambda-2\frac{\alpha}{\gamma}\right)\textbf{\emph{j}},
$$
which means that $G_{\subI}=(\{u\}\cup\{v\})+G$, with $G$ being a regular graph with adjacency
matrix $\textbf{\emph{C}}$. Then, for all $t\in(0,1]$, there exist positive integers
$\alpha,\gamma,$ such that $(\alpha,\alpha,\gamma,
\stackrel{(n)}{\ldots},\gamma)^{\top}$ is an eigenvector (since all its entries are positive, it
corresponds to the spectral radius). Indeed, the system
$$
\left(
\begin{array}{ccccc}
0 & t & \cdots & \textbf{\emph{j}}^\top & \cdots \\
t & 0 & \cdots & \textbf{\emph{j}}^\top  & \cdots \\
\vdots & \vdots & & & \\
\textbf{\emph{j}} & \textbf{\emph{j}} & & \textbf{\emph{C}} & \\
\vdots & \vdots & & &
\end{array}
\right)
\left(
\begin{array}{c}
\alpha\\
\alpha\\
\gamma\\
\vdots\\
\gamma
\end{array}
\right)
=
\lambda
\left(
\begin{array}{c}
\alpha\\
\alpha\\
\gamma\\
\vdots\\
\gamma
\end{array}
\right), \qquad
2\alpha^2+n\gamma^2=1,
$$
has solution
\begin{eqnarray*}
\alpha & = & \frac{1}{2}\sqrt{1-\frac{\delta-t}{\sqrt{(\delta-t)^2+8n}}}, \\
\gamma & = & \frac{1}{\sqrt{2n}}\sqrt{1+\frac{\delta-t}{\sqrt{(\delta-t)^2+8n}}}\\
\lambda & = & \frac{1}{2}\left(t+\delta+\sqrt{(\delta-t)^2+8n}\right),
\end{eqnarray*}
where $\delta$ is the degree of $G$, and inequalities
$(\ref{desigualtat21})$ and $(\ref{desigualtat22})$ are equalities for all $t\in (0,1]$.
Extending by continuity to $[0,1]$, we have the following possibilities:
\begin{itemize}
\item[$(a)$]
$\lambda'=f(t,\lambda)$, for all $t\in[0,1]$, $\lambda(0)=\lambda_{\subI}$ if $G_{\subI}=(\{u\}\cup\{v\})+G$, with $G$ being regular;
\item[$(b)$]
$\lambda'<f(t,\lambda)$, for all $t\in(0,1]$, $\lambda'(0)\leq f(0,\lambda(0))$,
$\lambda(0)=\lambda_{\subI}$, in any other case;
\end{itemize}
where $f$ is the right side of differential inequality $(\ref{inequaciodiferencial2})$.

      %\begin{figure}[htb]
%        \vspace{10cm}\hspace{1.5 cm}
%        \special{postscriptfile dib2art1.ps}
%        \caption{bound when an edge is added}
%      \end{figure}
%\vspace{.5 cm}

Now, the solution to Cauchy's problem
$$
y'=\frac{\delta_u+\delta_v}{(y-t)^2+\delta_u+\delta_v}, \qquad y(0)=\lambda_{\subI},
$$
is
$$
y-\frac{\delta_u+\delta_v}{y-t}=\lambda_{\subI}-\frac{\delta_u+\delta_v}{\lambda_{\subI}}.
$$
By introducing the invertible function
$$
K:(0,\infty)\rightarrow \mathbb{R}, \qquad
K(\xi)=\xi-\frac{\delta_u+\delta_v}{\xi},
$$
we can write $y(1)=1+K^{-1}(K(\lambda_{\subI})-1)$.

Lemma $\ref{lema1}$ applied to case $(b)$ completes the proof.
\end{proof}

\section{Addition of a pendant edge}
The last result presented here is on the change of the index of a graph $G$ when we add a pendant edge to one of its vertices.
In this context, Bell and Rowlinson~\cite{BeRo88} derived, under certain conditions, exact values for the index of the perturbed graph in terms of the spectrum and certain angles of $G$.

\begin{theo} \label{teoremapendantedge}
Let $G_{\subI}=(V,E)$ be a connected graph, let
$u\in V$ be a vertex of degree $\delta_u$ and take a vertex $v\not\in V$. Let
$G_{\subF}=(V\cup\{v\},E\cup\{uv\})$. If $\lambda_{\subI}$ and $\lambda_{\subF}$ are the spectral radii of
$G_{\subI}$ and $G_{\subF}$ respectively, then
$$
\lambda_{\subF}\leq L_2^{-1}L_1(\lambda_{\subI}),
$$
where $L_1:(0,+\infty)\rightarrow\mathbb{R}$ is
$L_1(\xi)=\xi-\frac{g\delta_u}{\xi}$
and
$L_2:(1,+\infty)\rightarrow\mathbb{R}$
is
$L_2(\xi)=\xi-\frac{g\delta_u}{\xi-\frac{1}{\xi}}$.
The equality is satisfied if and only if $G_{\subI}=\{u\}+G$, with $G$ being a regular graph.
\end{theo}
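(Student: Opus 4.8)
The plan is to treat the pendant edge as a continuous linear perturbation and reduce the claim to a scalar differential inequality, in complete analogy with the two preceding theorems. (Here the parameter $g$ of the first theorem equals $1$, since the new vertex $v$ is joined to the single vertex $u$; accordingly $L_1(\xi)=\xi-\delta_u/\xi$ and $L_2(\xi)=\xi-\delta_u/(\xi-1/\xi)$.) Order the $n+1$ vertices of $G_{\subF}$ so that $v$ comes first, $u$ second, and the $n-1$ remaining vertices of $G_{\subI}-u$ last. Writing $\w_u\in\{0,1\}^{n-1}$ for the indicator of the neighbours of $u$ in $G_{\subI}$ (so that $\|\w_u\|^2=\delta_u$) and $\C$ for the adjacency matrix of $G_{\subI}-u$, I would set
$$
\A(t)=\A_{\subI}+t\P=\left(\begin{array}{ccc} 0 & t & \vec0^{\top}\\ t & 0 & \w_u^{\top}\\ \vec0 & \w_u & \C\end{array}\right),\qquad t\in[0,1],
$$
where $\A_{\subI}$ is the adjacency matrix of $G_{\subI}$ together with the isolated vertex $v$. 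For $t\in(0,1]$ the matrix $\A(t)$ is nonnegative and connected, so its spectral radius $\lambda(t)$ is ${\cal C}^1$ with positive normalized eigenvector $\x(t)=(\alpha,\beta|\z)^{\top}$, and since $\P$ has only the two entries of the edge $uv$, Lemma~\ref{lema0} gives $\lambda'=\langle\P\x,\x\rangle=2\alpha\beta$.

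Next I would read off the relations from $\A(t)\x=\lambda\x$. The first (pendant) equation is $t\beta=\lambda\alpha$, whence $\alpha=t\beta/\lambda$ and
$$
\lambda'=2\alpha\beta=\frac{2t\beta^{2}}{\lambda}.
$$
The second equation $t\alpha+\langle\w_u,\z\rangle=\lambda\beta$ then gives $\langle\w_u,\z\rangle=\beta(\lambda^{2}-t^{2})/\lambda$. Combining the Cauchy--Schwarz bound $\langle\w_u,\z\rangle^{2}\le\|\w_u\|^{2}\|\z\|^{2}=\delta_u\|\z\|^{2}$ with the normalization $\|\z\|^{2}=1-\alpha^{2}-\beta^{2}=1-\beta^{2}(\lambda^{2}+t^{2})/\lambda^{2}$ and solving for $\beta^{2}$, I obtain the differential inequality
$$
\lambda'\le f(t,\lambda):=\frac{2t\delta_u\lambda}{(\lambda^{2}-t^{2})^{2}+\delta_u(\lambda^{2}+t^{2})},\qquad \lambda(0)=\lambda_{\subI}.
$$
The only inequality used is Cauchy--Schwarz, which keeps the equality analysis transparent.

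For the equality characterization I would argue, as in the two previous theorems, that on $(0,1]$ the relation is either a strict inequality throughout or an identity throughout. Equality at some $t_0$ forces $\z(t_0)$ proportional to $\w_u$; since $\z(t_0)$ has strictly positive entries and $\w_u$ is a nonzero $0/1$ vector, this forces $\w_u=\j$ and $\z(t_0)=\gamma\j$. Substituting $\z=\gamma\j$ into the remaining equations $\beta\w_u+\C\z=\lambda\z$ yields $\C\j=(\lambda-\beta/\gamma)\j$, so $G:=G_{\subI}-u$ is regular and $u$ is joined to every vertex of $G$, that is, $G_{\subI}=\{u\}+G$. As this is a property of $G_{\subI}$ alone, in that case the explicit positive vector $(\alpha,\beta,\gamma\j)^{\top}$ (with $\beta=(\lambda-\delta)\gamma$ and $\lambda$ the largest root of $(\lambda^{2}-t^{2})(\lambda-\delta)=\delta_u\lambda$, $\delta$ being the degree of $G$) is an eigenvector for every $t$, so equality holds on all of $(0,1]$; otherwise the inequality is strict throughout. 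This produces the dichotomy $(a)$ (equality, $G_{\subI}=\{u\}+G$ with $G$ regular) versus $(b)$ (strict), in the notation of Lemma~\ref{lema1}, extending by continuity to $t=0$ as in the remark after Lemma~\ref{lema0}.

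Finally I would solve the Cauchy problem $y'=f(t,y)$, $y(0)=\lambda_{\subI}$. Rather than integrate the implicit cubic directly, I would exhibit the first integral
$$
H(y,t)=y-\frac{\delta_u\,y}{y^{2}-t^{2}},
$$
well defined because $\lambda(t)>t$ on $[0,1]$. Its total derivative along a solution is
$$
\frac{dH}{dt}=y'\,\frac{(y^{2}-t^{2})^{2}+\delta_u(y^{2}+t^{2})}{(y^{2}-t^{2})^{2}}-\frac{2t\delta_u y}{(y^{2}-t^{2})^{2}},
$$
which vanishes exactly when $y'=f(t,y)$; hence $H$ is conserved. Since $H(\xi,0)=\xi-\delta_u/\xi=L_1(\xi)$ and $H(\xi,1)=\xi-\delta_u\xi/(\xi^{2}-1)=\xi-\delta_u/(\xi-1/\xi)=L_2(\xi)$, conservation gives $L_2(y(1))=H(y(1),1)=H(\lambda_{\subI},0)=L_1(\lambda_{\subI})$, that is $y(1)=L_2^{-1}L_1(\lambda_{\subI})$. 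Comparing $\lambda$ with $y$ through Lemma~\ref{lema1} (case $(b)$) yields $\lambda_{\subF}=\lambda(1)<y(1)=L_2^{-1}L_1(\lambda_{\subI})$ in the strict case, with equality exactly in case $(a)$. I expect the decisive step to be the integration of the ODE: its solution is only implicit (a cubic in $y$), so the real idea is to guess the conserved quantity $H$ from the regular equality configuration and then verify $dH/dt=0$; once $H$ is in hand, the identifications $H(\cdot,0)=L_1$ and $H(\cdot,1)=L_2$ are immediate.
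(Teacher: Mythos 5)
Your proposal is correct, and in its overall architecture it coincides with the paper's proof: the same matrix family $\A(t)$ with the pendant vertex split off, the same $\lambda'=2\alpha\beta$ from Lemma~\ref{lema0}, the same differential inequality $\lambda'\le 2t\delta_u\lambda/\bigl((\lambda^2-t^2)^2+\delta_u(\lambda^2+t^2)\bigr)$ (your Cauchy--Schwarz step is the paper's $\cos^2\varphi\le1$ in different clothing), the same equality-throughout versus strict-throughout dichotomy with the characterization $G_{\subI}=\{u\}+G$, $G$ regular, and the same appeal to Lemma~\ref{lema1}; you also correctly read the statement's stray $g$ as $g=1$. Where you genuinely depart is the integration of the Cauchy problem. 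The paper substitutes $y=\sqrt{(S+R)/2}$, $t=\sqrt{(S-R)/2}$ to get a linear equation, solves it, and arrives at an implicit equation in $\nu=y(1)$ that must be factorized into two cubics, with the admissible root $\nu\ge\sqrt{\delta_u+1}$ then singled out. You instead exhibit the first integral $H(y,t)=y-\delta_u y/(y^2-t^2)$, check $dH/dt=0$ along solutions (your computation of $\partial H/\partial y$ and $\partial H/\partial t$ is right), and read off $L_2(y(1))=H(y(1),1)=H(\lambda_{\subI},0)=L_1(\lambda_{\subI})$; clearing denominators in $L_2(\nu)=L_1(\lambda_{\subI})$ reproduces exactly the paper's first cubic $\nu^3-L_1(\lambda_{\subI})\nu^2-(\delta_u+1)\nu+L_1(\lambda_{\subI})=0$, so the two computations agree, but yours bypasses both the factorization and the root-selection, which is cleaner and makes the definitions of $L_1$ and $L_2$ look inevitable rather than accidental. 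Its only cost is that $H$ appears ex machina, though you motivate it plausibly. One small repair: to keep $H$ finite you invoke $\lambda(t)>t$, but the function that must stay above the line $x=t$ is the comparison solution $y$, not $\lambda$; this is easily fixed, either by noting that $y$ is nondecreasing with $y(0)=\lambda_{\subI}\ge1$ and $y'>0$ for $t>0$, or by observing that conservation itself forbids $y\downarrow t$ since $H\to-\infty$ there --- a technicality handled at the same (implicit) level of rigor as in the paper's own integration.
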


\begin{proof}
      Let $n+1$ be the order of $G_{\subI}$.
   Rearranging the vertices suitably, the perturbation matrix
   $\P=(p_{ij})$ has $p_{12}=p_{21}=1$
   and the other entries are zero. Let us consider the matrices
   $$
   \A(t) =
      \left(
      \begin{array}{ccccc}
        0 & t & 0 & \cdots & 0 \\
        t & 0 & \cdots & \w^\top & \cdots\\
        0 & \vdots & & & \\
        \vdots & \w & & \textbf{\emph{C}} &  \\
        0 & \vdots &   &   &
      \end{array}
      \right), \qquad
      t\in[0,1],
      $$
such that $\A(0)$ is the adjacency matrix of the graph $G_{\subI}\cup\{u\}$, with the same spectral radius as $G_{\subI}$.

Now Eq. $(\ref{fonamental})$ becomes
   $$
   \lambda'=\langle\P\x,\x\rangle=2\alpha\beta,
   $$
where $\x(t)=(\alpha,\beta|\z)^{\top}$, with $\z^{\top}=(z_1,z_2,\ldots,z_n)^{\top}$ being
the normalized positive eigenvector, $t\in(0,1)$. The first two entries of the matrix equation $(\lambda(t)\textbf{\emph{I}}-\A(t))\x(t)=\vec0$ give the system
\begin{eqnarray*}
\lambda\alpha-t\beta &=& 0,\\
-t\alpha+\lambda\beta &=& \langle \w,\z\rangle.
\end{eqnarray*}
Introducing the angle $\varphi$ determined by $\z$ and $\w$, we can express the solution by
\begin{eqnarray*}
\alpha &=& \sqrt{\delta_u}\frac{\|\z\|}{\lambda^2-t^2}t\cos\varphi,\\
\beta &=& \sqrt{\delta_u}\frac{\|\z\|}{\lambda^2-t^2}\lambda\cos\varphi.
\end{eqnarray*}
Hence, using $\alpha^2+\beta^2+\|\z\|^2=1$, we obtain
$$
\lambda'=\frac{2\delta_ut\lambda\cos^2\varphi}{\delta_u(\lambda^2+t^2)\cos^2\varphi+(\lambda^2-t^2)^2}.
$$
   The constraint $\cos^2\varphi\leq 1$ implies that
   \begin{equation}\label{desigualtat4}
     \lambda'\leq\frac{2\lambda t\delta_u}{(\lambda^2-t^2)^2+\delta_u(\lambda^2+t^2)}
   \end{equation}
   for all $t\in(0,1]$.
   Let us observe that the continuous extension of $(\ref{desigualtat4})$
   to $t=0$ gives an equality, since $\alpha(0)=0$.

We now prove that inequality $(\ref{desigualtat4})$ is either an equality or a
strict inequality in the interval $(0,1]$. Indeed, if there existed $t_{0}\in(0,1]$ for which
$(\ref{desigualtat4})$ were an equality, then $\z(t_{0})$ and $\w$ would be proportional. As
all the entries of $\z$
   are strictly positive and $\w$ is not a null vector, then $\w=\textbf{\emph{j}}$ and $\z(t_{0})=\delta\textbf{\emph{j}}$. The last
$n$ equations of $(\lambda(t_0)\textbf{\emph{I}}-\A(t_0))\x(t_0)=\vec0$ give
   $ \textbf{\emph{C}}\textbf{\emph{j}}=\left(\lambda-\frac{\beta}{\delta}\right)\textbf{\emph{j}}$.
   Therefore, the graph
$G_{\subI}$ is $\{u\}+G$, with $G$ being a regular graph of degree $\delta=\lambda-\frac{\beta}{\delta}$ and with adjacency matrix $\textbf{\emph{C}}$.
   Then, $\z=\delta\textbf{\emph{j}}$ and, therefore, it is proportional to $\w=\textbf{\emph{j}}$, for all $t\in(0,1]$.
Indeed, the system
   $$
   \left(
      \begin{array}{ccccc}
        0 & t & 0 & \cdots & 0 \\
        t & 0 & \cdots & \textbf{\emph{j}}^\top  & \cdots\\
        0 & \vdots &   &   &       \\
        \vdots & \textbf{\emph{j}} & & \textbf{\emph{C}} &  \\
        0 & \vdots &   &   &
      \end{array}
      \right)
      \left(
      \begin{array}{c}
        \alpha\\
        \beta\\
        \gamma\\
        \vdots\\
        \gamma
      \end{array}
      \right)
      = \lambda
      \left(
      \begin{array}{c}
        \alpha\\
        \beta\\
        \gamma\\
        \vdots\\
        \gamma
      \end{array}
      \right), \qquad
      \alpha^2+\beta^2+n\gamma^2=1,
      $$
      gives the eigenvector of strictly positive entries
      $$
      \alpha=\frac{(\lambda-\delta)t}{\sqrt{\Lambda}}, \qquad
      \beta=\frac{(\lambda-\delta)\lambda}{\sqrt{\Lambda}}, \qquad
      \gamma=\frac{\lambda}{\sqrt{\Lambda}},
      $$
      where $\Lambda=2(n+t^2)\lambda^2-\delta(n+t+3t^2)\lambda+2t^2\delta^2$ and
      $\lambda$ is the maximum root of the polynomial
      $\lambda^3-\delta\lambda^2-(n+t^2)\lambda+\delta t^2$.
      By continuity, we thus have the two following possibilities:
\begin{itemize}
\item[$(a)$]
$\lambda'=f(t,\lambda)$, for all $t\in[0,1]$, $\lambda(0)=\lambda_{\subI}$
if $G_{\subI}=\{u\}+G$, with $G$ being a regular graph,
\item[$(b)$]
$\lambda'<f(t,\lambda)$, for all $t\in(0,1]$, $\lambda'(0)\leq f(0,\lambda(0))$, $\lambda(0)=\lambda_{\subI}$, in any other case,
\end{itemize}
where $f$ is the right side of differential inequality
$(\ref{desigualtat4})$.

%\begin{figure}[htb]
%        \vspace{7.5 cm}\hspace{1 cm}
%        \special{postscriptfile dib3art1.ps}
%        \caption{bound when a pendant edge is added}
%      \end{figure}
%
%      \vspace{.5 cm}

The differential equation
$$
y'= 2\delta_u\frac{ty}{(y^2-t^2)^2+\delta_u(y^2+t^2)},
$$
with initial condition $y(0)=\lambda_{\subI}\,$, is transformed into a linear equation
by means of the change:
$y=\sqrt{\frac{S+R}{2}}, t=\sqrt{\frac{S-R}{2}}$.
Solving it, we calculate implicitly $y(1)$,
represented by $\nu$, as one root of the equation
$$
(\nu^2+1)(\nu^2-1-\delta_u)^2+(\nu^2-1)^3+2\left(\delta_u -\lambda_{\subI}^2-\frac{\delta_u^2}
{\lambda_{\subI}^2}\right) (\nu^2-1)^2+\delta_u^2(\nu^2-1)=0,
$$
which may be factorized into the following two cubic equations:
\begin{eqnarray*}
& & \nu^3-\left(\lambda_{\subI}-\frac{\delta_u}{\lambda_{\subI}}\right)\nu^2-
        (\delta_u+1)\nu+\left(\lambda_{\subI}-\frac{\delta_u}{\lambda_{\subI}}\right)=0,\\
& & \nu^3+\left(\lambda_{\subI}-\frac{\delta_u}{\lambda_{\subI}}\right)\nu^2-
(\delta_u+1)\nu-\left(\lambda_{\subI}-\frac{\delta_u}{\lambda_{\subI}}\right)=0.
\end{eqnarray*}
The three roots of both equations are real, but only one in the first equation satisfies the necessary
condition $\nu\geq\sqrt{\delta_u+1}$. Introducing the bijective functions
$$
L_1:(0,+\infty) \rightarrow \mathbb{R}, \quad
L_1(\xi)=\xi-\frac{\delta_u}{\xi}, \qquad
L_2:(1,+\infty) \rightarrow \mathbb{R}, \quad
L_2(\xi)=\xi-\frac{\delta_u}{\xi-\frac{1}{\xi}},
$$
we can express $y(1)=L_2^{-1} L_1(\lambda_{\subI})$.
As before, Lemma $\ref{lema1}$ applied to case $(b)$ completes the proof.
      \end{proof}

  %\end{provateo}
  %\vspace{.5 cm}

\section{Asymptotic behavior}

It is illustrative to compare the bounds obtained in the three above theorems for graphs with
large index. Making the corresponding asymptotic developments, we have the following cases:
\begin{itemize}
\item[$(a)$]
Connection of an isolated vertex (to $g$ vertices):
$$
\lambda_{\subF}\leq H^{-1}(\lambda_{\subI})= \lambda_{\subI}+\frac{g}{\lambda_{\subI}}+\textrm{o}\left(
\frac{1}{\lambda_{\subI}}\right).
$$
\item[$(b)$]
Addition of an edge (between vertices of degrees $\delta_u,\delta_v$):
$$
\lambda_{\subF}\leq 1+K^{-1}(K(\lambda_{\subI})-1)=
\lambda_{\subI}+\frac{\delta_u+\delta_v}{\lambda_{\subI}^2}+\textrm{o}\left(
\frac{1}{\lambda_{\subI}^2}\right).
$$
\item[$(c)$]
Addition of a pendant edge (to a vertex of degree $\delta_u$):
$$
\lambda_{\subF}\leq L_2^{-1}L_1(\lambda_{\subI})= \lambda_{\subI}+\frac{\delta_u}{\lambda_{\subI}^3}+\textrm{o}\left(
\frac{1}{\lambda_{\subI}^3}\right).
$$
  \end{itemize}

Let us observe that the maximum possible variation in the spectral radius caused by the three
perturbations considered are, for large $\lambda_{\subI}$, of different orders of magnitude.

Notice also that, by applying iteratively the above formulas, we can obtain asymptotic bounds for `multiple perturbations'. For instance, if $G_{\subF}$ is obtained from $G_{\subI}$ by joining all the vertices $u_1,u_2,\ldots,u_m$ of a coclique, with respective degrees $\delta_1,\delta_2,\ldots,\delta_m$, we get, by applying the bound for the addition of an edge ${m\choose 2}$ times,
$$
\lambda_{\subF}\leq
\lambda_{\subI}+\frac{m-1}{\lambda_{\subI}^2}\sum_{i=1}^m \delta_i+\textrm{o}\left(
\frac{1}{\lambda_{\subI}^2}\right).
$$

\vskip 1cm
\noindent{\large \bf Acknowledgments.} The authors are most grateful to Professor Peter Rowlinson for his useful comments and suggestions on the topic of this paper. Research supported by the {\em Ministerio de Ciencia e Innovaci\'on}, Spain, and the {\em European Regional Development Fund} under project MTM2008-06620-C03-01 and by the {\em Catalan Research Council} under project 2009SGR1387.

\newpage

\end{document}